\documentclass[11pt,leqno]{article}
\usepackage{amsmath,amssymb,amsthm,latexsym}
\usepackage{indentfirst}
\usepackage{amsmath}

\newtheorem{theorem}{\bf \large theorem}[section]
\newtheorem{proposition}{\bf \large Proposition}[section]
\newtheorem{corollary}{\bf \large Corollary}[section]

\newtheorem{lemma}{\bf\large Lemma}[section]
\newtheorem{Conjecture}{\bf\large Conjecture}[section]

\title{\textbf{The second gap for  self-shrinkers with constant norm of the second fundamental
form}}
\author {\normalsize { Pengpeng Cheng,~~Tongzhu Li}\\
\small{Department of Mathematics, Beijing Institute of
Technology,}\\
\small{Beijing, 100081, China.}\\
\small{E-mail:~3120235951@bit.edu.cn,~~litz@bit.edu.cn.}}

\date{}

\begin{document}
\maketitle
\begin{abstract}
Let $X: M^{n}\to \mathbb{R}^{n+1}$ be a complete self-shrinker with constant squared norm of the second fundamental
form $S$. In this paper, we prove that if $S\leq \frac{10}{7}$, then $S=1$ or $S=0$, and the self-shrinker is isometric to either a round sphere $\mathbb{S}^n(\sqrt{n})$ with the center at the origin, or a cylinder $\mathbb{S}^k(\sqrt{k})\times \mathbb{R}^{n-k},~~1\leq k\leq n-1$, or a plane through the origin.
\end{abstract}
\medskip\noindent
{\bf 2020 Mathematics Subject Classification:} 53E10, 53C40.
\par\noindent {\bf Key words:}  self-shrinkers, generalized maximum principle, the second fundamental form, scalar curvature.

\vskip 1 cm
\section{Introduction}
Let $X: M^{n}\to \mathbb{R}^{n+1}$ be a smooth immersed  hypersurface in the $(n+1)$-dimensional Euclidean space $\mathbb{R}^{n+1}$. We call $X$ a self-shrinker if
\begin{equation}\label{self}
H+\langle X,\xi\rangle=0,
\end{equation}
where $H$ and $\xi$
denote the mean curvature and the unit normal vector of $X$, respectively.

An important problem in the study of the mean curvature flow is to understand the geometry of the possible singularities of the flow.
Self-shrinkers  describe all possible blow ups at a given type I singularity of the mean curvature flow. On the other hand, self-shrinkers correspond to self-shrinking solutions to the mean curvature flow. Therefore it is  important in the study of the mean curvature flow to classify all self-shrinkers.

There are many important results about the classification of self-shrinkers. Abresch and Langer in \cite{ab} classified all closed self-shrinker curves in $\mathbb{R}^2$, which are called Abresch-Langer curves, and they showed that the only simple closed self-shrinker is the circle. In \cite{e-h}, Ecker and Huisken proved that if an entire graph with polynomial volume growth is a self-shrinker, then it is necessarily a hyperplane, which is proved by  Wang without the assumption of polynomial volume growth in \cite{w}. In \cite{hui-1}, Huisken proved that sphere $\mathbb{S}^n(\sqrt{n})$ is the only $n$-dimensional compact self-shrinker in $\mathbb{R}^{n+1}$ with non-negative mean curvature. When the hypersurface is complete, in \cite{hui-2}, Huisken proved that the complete self-shrinkers with non-negative mean curvature and polynomial volume growth are $\Gamma\times \mathbb{R}^{n-1}$, or $\mathbb{S}^m(\sqrt{m})\times \mathbb{R}^{n-m},~(0\leq m\leq n)$ if the norm of the second fundamental form is bounded, where $\Gamma$ is an Abresch-Langer curve. In \cite{cold-2}, Colding and Minicozzi proved that Huisken's classification theorem still holds without the assumption that $S$ is bounded. Furthermore Colding and Minicozzi gave a classification of $n$-dimensional $\mathfrak{F}$-stable complete self-shrinkers with polynomial volume growth and parallel principal normal, which is  extended to higher codimensional submanifolds by Andrews,  Li and  Wei \cite{and}. In \cite{caoli}, Cao and Li classified the complete self-shrinkers with polynomial volume growth and $S\leq 1$. In the works of Ecker-Huisken \cite{e-h} and Colding-Minicozzi \cite{cold-2} the polynomial volume growth plays an important role
in studying self-shrinkers. We say the hypersurface $X(M^n)$ in $\mathbb{R}^{n+1}$ has polynomial volume growth if there exist constants $C$ and $d$ such that for all $r\geq 1$, there holds
$$Vol\Big(B(r)\cap X(M^n)\Big)\leq Cr^d,$$
where $B(r)$ denotes a Euclidean ball with radius $r$.
According to the results of Cheng and Zhou \cite{chzh}, Ding and Xin \cite{d-2}, the self-shrinker has polynomial volume growth if and only if the self-shrinker is proper.

According to the results of  \cite{hall}, Halldorsson constructed a complete self-shrinker $\gamma\times \mathbb{R}^{n-1}$ without polynomial volume growth, where $\gamma$ is a complete self-shrinker curve of Halldorsson.
By the generalized maximum principle for the $L$-operator proved by Cheng and Peng in \cite{c-2}, Cheng and Peng studied complete self-shrinkers without the assumption of polynomial volume growth. In \cite{d-3}, Ding and Xin proved that a $2$-dimensional complete self-shrinker with polynomial volume growth and constant squared norm $S$ of the second fundamental form is one of $\mathbb{R}^2$, $\mathbb{S}^1(1)\times \mathbb{R}$ and $S^2(\sqrt{2})$. Cheng and Ogata \cite{c-1} showed that Ding and Xin's result holds without the assumption of polynomial volume growth:

\begin{theorem}\cite{c-1}
A $2$-dimensional complete self-shrinker $X: M^2\to \mathbb{R}^3$ with constant squared norm of the second fundamental form is isometric to one of $\mathbb{R}^2$, $\mathbb{S}^1(1)\times \mathbb{R}$ and $\mathbb{S}^2(\sqrt{2})$.
\end{theorem}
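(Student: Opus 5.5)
The plan is to run the standard argument for constant $S$ via the generalized maximum principle of Cheng and Peng for the operator $\mathcal{L}=\Delta-\langle X,\nabla\cdot\rangle$. The operator $\mathcal{L}$ is used throughout the self-shrinker literature but is not defined in the paper so far, so I define it here. In $n=2$ the Gauss equation ties the scalar curvature to $H^2-S$, which gives enough extra rigidity to close the argument.

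\textbf{Step 1: the basic identities.} Write $h_{ij}$ for the second fundamental form, $S=\sum h_{ij}^2$ and $H=\sum h_{ii}$. The self-shrinker equation gives $H_{,i}=\sum_j h_{ij}\langle X,e_j\rangle$, $\mathcal{L}H=H(1-S)$ and
$$\tfrac12\mathcal{L}S=\sum h_{ijk}^2+S(1-S).$$
Since $S$ is constant, this yields $\sum h_{ijk}^2=S(S-1)$, so $S=0$ or $S\ge 1$. If $S=0$, then $M$ is a plane, and the equation $H+\langle X,\xi\rangle=0$ forces it to pass through the origin, which gives $\mathbb{R}^2$. If $S=1$, then $\nabla h=0$. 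The surface is then isoparametric, hence $\mathbb{S}^1(1)\times\mathbb{R}$ or $\mathbb{S}^2(\sqrt2)$, since the radii are fixed by $H=-\langle X,\xi\rangle$.

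\textbf{Step 2: rule out $S>1$.} Since $S$ is bounded, the Cheng--Peng generalized maximum principle applies to bounded functions such as $H$ and $H^2$. There is a sequence $p_m$ with $H(p_m)\to\sup H$ (or $\inf H$), $|\nabla H|(p_m)\to0$ and $\limsup\mathcal{L}H(p_m)\le0$. Evaluating $\mathcal{L}H=H(1-S)$ along this sequence gives $(\sup H)(1-S)\le0$, and similarly at the infimum. From this I will extract $\sup H^2$ and $\inf H^2$ information.

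In dimension two I diagonalize $h_{ij}=\lambda_i\delta_{ij}$. Then $S=\lambda_1^2+\lambda_2^2$ and $H=\lambda_1+\lambda_2$. The Codazzi tensor has only components $h_{111},h_{112},h_{122},h_{222}$, and $\nabla S=0$ gives $\lambda_1h_{11k}+\lambda_2h_{22k}=0$. I also use the third-order identity
$$\tfrac12\mathcal{L}\sum h_{ijk}^2=\sum h_{ijkl}^2+(2-S)\sum h_{ijk}^2+(\text{cubic terms in }h,\nabla h).$$
The left side vanishes identically because $\sum h_{ijk}^2=S(S-1)$ is constant. At the points $p_m$ where $H\to\sup H$ or $H\to\inf H$, we have $\nabla H\to0$. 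Combined with $\lambda_1h_{11k}+\lambda_2h_{22k}=0$, this determines all $h_{ijk}$ in terms of $\lambda_i$ in the limit. Substituting into $\sum h_{ijk}^2=S(S-1)$ and into the third-order identity (with $\sum h_{ijkl}^2\ge0$) should produce a polynomial inequality in $\lambda_1,\lambda_2$ that fails for $S>1$.

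\textbf{Main obstacle.} The hard step is controlling $\sum h_{ijkl}^2$ and the cubic terms at the limit points. The generalized maximum principle gives no control on the second derivatives of $H$ there. I would first try applying the principle to $H^2$ and to $f=\sum\lambda_i^3$, and compute $\mathcal{L}f$. This yields two limiting relations at $\sup H^2$ and $\inf H^2$, and comparing them in the two-variable setting, using $\lambda_1^2+\lambda_2^2=S$, should force $S=1$. A real risk is a degenerate case $\lambda_1=\lambda_2$ or $H\to0$ at the limit points. I would treat that case separately via the Gauss curvature $K=\lambda_1\lambda_2=(H^2-S)/2$ and the Gauss--Bonnet-type identity $\Delta$ of a suitable function.
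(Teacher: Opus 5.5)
Your Step 1 is fine, but Step 2 is the only substantive part, and it is not carried out. The plan around it also misplaces the difficulty.

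\begin{itemize}
\item Applying the maximum principle to $H$ or $H^2$ gives only sign information, such as $\sup H\ge 0\ge \inf H$.
\item Applying it to $f_3$ adds nothing. In dimension two $f_3=\lambda_1^3+\lambda_2^3=\frac32 SH-\frac12 H^3$ is a function of $H$, so your proposed comparison of limiting relations has no content.
\item At a limit point with $\nabla H=0$, the relations $h_{11k}+h_{22k}=0$ and $\lambda_1h_{11k}+\lambda_2h_{22k}=0$ do not ``determine $h_{ijk}$ in terms of $\lambda_i$''. If $\lambda_1\ne\lambda_2$, they force $h_{111}=h_{112}=h_{122}=h_{222}=0$. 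That already contradicts $\sum h_{ijk}^2=S(S-1)>0$. If $\lambda_1=\lambda_2$, they only give $h_{11k}=-h_{22k}$ and leave the $h_{ijk}$ undetermined.
\end{itemize}

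So the umbilic limit is the only case that survives. It is exactly the case you postpone to an unspecified ``Gauss--Bonnet-type identity''. Any integral identity would also need polynomial volume growth, which this theorem does not assume.

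The missing observation is that your third-order identity needs no second-order information from the maximum principle. Since $\sum h_{ijk}^2$ is constant, its $\mathcal{L}$ vanishes identically. When $S$ is constant, your unspecified ``cubic terms'' are exactly $-3(A-2B)$. This gives the pointwise identity recorded in the paper:
\[
\sum_{i,j,k,l}h_{ijkl}^2=S(S-1)(S-2)+3(A-2B),\qquad A=\sum_{i,j,k}\lambda_i^2h_{ijk}^2,\quad B=\sum_{i,j,k}\lambda_i\lambda_jh_{ijk}^2 .
\]
The right-hand side is nonnegative at every point and continuous in $(\lambda_i,h_{ijk})$. Hence it remains nonnegative at the limit of a convergent subsequence of the Cheng--Peng sequence for $\sup H$. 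The principle applies because Ricci is bounded below when $S$ is constant.

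At an umbilic limit $\lambda_1=\lambda_2=\lambda$ with $2\lambda^2=S$, one has $A=B=\frac{S}{2}\,S(S-1)$. The right-hand side then equals $-\frac12 S(S-1)(S+4)<0$, a contradiction. Combined with the non-umbilic case above, this rules out $S>1$ and completes your argument.

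For comparison: the paper does not prove this statement; it quotes it from Cheng--Ogata. Its own argument covers only $S\le 10/7$, which does not reach $n=2$, $S>10/7$, so a genuinely two-dimensional argument like this one is needed.
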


For $3$-dimensional hypersurfaces,  Cheng, Li and Wei \cite{c-3} gave the same results under the assumption that $f_4$ is constant.
Based on the above results, Cheng and Wei \cite{cw-survey}  proposed the following conjecture:
\begin{Conjecture}\label{con}
An $n$-dimensional complete self-shrinker $X: M^n\to \mathbb{R}^{n+1}$ with constant squared norm of the second fundamental form is isometric to one of a hyperplane $\mathbb{R}^n$ passing through the origin,  a round sphere $\mathbb{S}^n(\sqrt{n})$ with the center at the origin, or a cylinder $\mathbb{S}^k(\sqrt{k})\times \mathbb{R}^{n-k},~1\leq k\leq n$. In particular, $S$ must be $0$ or $1$.
\end{Conjecture}
In \cite{c-6}, Cheng-Li-Wei proved the Conjecture for dimension $3$ under $f_3=$constant:
\begin{theorem}(\cite{c-6})
Let $X: M^3\to \mathbb{R}^4$
be a $3$-dimensional complete self-shrinker in $R^4$. If the squared norm
$S$ of the second fundamental form and $f_3$ are constant, then $X: M^3\to \mathbb{R}^4$
is isometric to one of
(1) $\mathbb{S}^3(\sqrt{3})$,
(2) $\mathbb{R}^3$,
(3) $\mathbb{S}^1(1)\times \mathbb{R}^2$, and
(4) $\mathbb{S}^2(\sqrt{2})\times \mathbb{R}^1$.\\
In particular, $S$ must be $0$ or $1$; $f_3$ must be $0, 1,\frac{\sqrt{2}}{2}$ or $\frac{\sqrt{3}}{3}$,
 where $h_{ij}$'s are the components of the second
fundamental form, $S=\sum_{i,j}h_{ij}^2$ and $f_3=\sum_{i,j,k}h_{ij}h_{jk}h_{ki}$.
\end{theorem}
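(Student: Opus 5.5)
The plan is to combine the standard Simons-type identities for self-shrinkers with the generalized maximum principle of Cheng and Peng for the operator $\mathcal{L}=\Delta-\langle X,\nabla\,\cdot\,\rangle$. Since $S$ is constant, $|h|$ is bounded. Gauss' equation then gives Ricci curvature bounded below, so the generalized maximum principle applies without any volume growth hypothesis.

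The first step records the identities
\[
\tfrac12\mathcal{L}S=|\nabla h|^2+S(1-S),\qquad \mathcal{L}H=H(1-S),
\]
and the analogous formula for $\mathcal{L}f_3$, which involves $3(1-S)f_3$ and a term $6\sum h_{ij}h_{kl,i}h_{kl,j}$. With $S$ constant the first identity gives $|\nabla h|^2=S(S-1)$. Hence either $S=0$, in which case $M$ is a hyperplane through the origin, or $S\ge1$. If $S=1$ then $\nabla h=0$, so $M$ is isoparametric, and Lawson's classification gives $\mathbb{S}^3(\sqrt3)$, $\mathbb{S}^2(\sqrt2)\times\mathbb{R}$ or $\mathbb{S}^1(1)\times\mathbb{R}^2$. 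It therefore remains to rule out $S>1$.

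Assume $S>1$. Diagonalize $h_{ij}=\lambda_i\delta_{ij}$ at each point. Differentiating the constancy of $S$ and $f_3$ gives $\sum_i\lambda_ih_{iik}=0$ and $\sum_i\lambda_i^2h_{iik}=0$, while $H_{,k}=\sum_ih_{iik}$. Apply the generalized maximum principle to the bounded function $H$, obtaining a sequence $p_m$ with $H(p_m)\to\sup H$, $|\nabla H|(p_m)\to0$ and $\limsup\mathcal{L}H(p_m)\le0$. The identity $\mathcal{L}H=H(1-S)$ then gives $\sup H\ge0$, and similarly $\inf H\le0$. Passing to a subsequence, all the bounded quantities $\lambda_i$, $h_{ijk}$, $h_{ijkl}$ converge.

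At the limit, the three linear relations on $(h_{11k},h_{22k},h_{33k})$ form a Vandermonde system. If the limiting $\lambda_i$ are distinct, this forces $h_{iik}=0$, so by the symmetry of $h_{ijk}$ only $h_{123}$ survives, and $|\nabla h|^2=6h_{123}^2=S(S-1)$. Next I would feed this into the $\mathcal{L}f_3$ identity (using $f_3$ constant, so $\mathcal{L}f_3=0$) and into the second-order identity
\[
0=\tfrac12\mathcal{L}|\nabla h|^2=|\nabla^2h|^2+(2-S)|\nabla h|^2+(\text{cubic terms in }h,\nabla h),
\]
and evaluate both at the limit point. Together with $\sum\lambda_i^2=S$, $\sum\lambda_i^3=f_3$ and $\sum\lambda_i=\sup H$ (respectively $\inf H$), this should yield polynomial identities in the limiting $\lambda_i$. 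Comparing the sup and inf sequences, which share the same $S$ and $f_3$, should force $\sup H=\inf H$. Then $H$ is constant, the $\lambda_i$ are constant, and $M$ is isoparametric, contradicting $S>1$.

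I expect the main obstacle to be this last algebraic step. It requires controlling $|\nabla^2h|^2$ from below using the Codazzi and Ricci commutation relations in dimension three, and handling the degenerate cases where two limiting principal curvatures coincide, in which the Vandermonde argument breaks down. For the degenerate cases I would first try a direct computation with $\lambda_1=\lambda_2\ne\lambda_3$, using the remaining relation $\lambda_1h_{11k}+\lambda_1h_{22k}+\lambda_3h_{33k}=0$ together with the constraints on $S$ and $f_3$. Failing that, I would apply the maximum principle to $f_4$ instead, as in the earlier work of Cheng, Li and Wei.
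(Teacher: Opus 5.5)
The paper gives no proof of this statement; it only quotes it from \cite{c-6}. So I am judging your argument on its own merits.

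\textbf{What works.} Your reduction to $S>1$ is correct. Your nondegenerate case also closes, and in one line, with no second-order identity. Because $f_3$ is constant,
\[
\tfrac13 Lf_3=(1-S)f_3+2\sum_{i,j,k}\lambda_ih_{ijk}^2=0,
\]
so $\sum_{i,j,k}\lambda_ih_{ijk}^2=\frac{(S-1)f_3}{2}$ everywhere. At a limit where only $h_{123}$ survives, this sum equals $2Hh_{123}^2=\frac{HS(S-1)}{3}$. Hence $H=\frac{3f_3}{2S}$ at every such limit. If both the $\sup H$ and the $\inf H$ limits have distinct principal curvatures, then $H$ is constant. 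Newton's identities then make the principal curvatures constant, and Segre's theorem gives $\nabla h=0$, contradicting $|\nabla h|^2=S(S-1)>0$. The same identity also rules out a totally umbilic limit: $\lambda S(S-1)=\frac32\lambda^3(S-1)$ together with $S=3\lambda^2$ forces $\lambda=0$.

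\textbf{The gap.} You leave open the case of exactly two coinciding limiting curvatures, $\lambda_1=\lambda_2=\lambda\neq\mu=\lambda_3$, and this case cannot be set aside as exceptional. On the set $\{\sum\lambda_i^2=S,\ \sum\lambda_i^3=f_3\}$, the extreme values of $\sum_i\lambda_i$ are attained only where the Vandermonde determinant vanishes, that is, at triples with a repeated entry. So whenever $\sup H$ or $\inf H$ reaches the end of the algebraically admissible range, the limit is necessarily of this type.

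In this configuration your three linear relations give only $h_{33k}=0$ and $h_{11k}=-h_{22k}$. That leaves $h_{111},h_{222},h_{113},h_{123}$ free. Set $P=h_{111}^2+h_{222}^2$ and $Q=h_{113}^2+h_{123}^2$. Your two identities become
\[
4P+6Q=S(S-1),\qquad 4\lambda P+2HQ=\frac{(S-1)f_3}{2}.
\]
This linear system has determinant $8(\mu-\lambda)\neq0$. It determines $P$ and $Q$ but imposes no condition on $H$, so the sup and inf values are not pinned down.

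The $L$-maximum principle applied to $H$ gives no second-order information here either. Since $LH=H(1-S)$ holds identically, $\limsup LH(p_m)\le0$ says only that $\sup H\ge0$. Any second-order input in the degenerate case must therefore be supplied separately and evaluated in this configuration. Possible sources are $\sum h_{ijkl}^2=S(S-1)(S-2)+3(A-2B)$ together with the differentiated constraints and the Ricci identity, or a Hessian-type maximum principle.

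That is the substantive part of the theorem, and ``apply the maximum principle to $f_4$ instead'' is not yet an argument for it. As written, your proposal proves the result only under the extra assumption that neither limiting configuration has a repeated principal curvature.
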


Under a polynomial volume growth assumption \cite{caoli}, in \cite{c-0} Cheng and Wei proved the Conjecture when the constant satisfies  $S\le10/7$. Recently, Cheng, Li, and Wei proved the Conjecture when  $S\le7/5$ in every dimension without a polynomial volume growth assumption \cite{cw-1}.
In this paper, we have improved the upper bound of the constant in \cite{cw-1}, but it is not optimal. In detail, we have obtained the following results.

\begin{theorem}\label{thm1}
Let $X:M^n\to\mathbb R^{n+1}$ be a  complete self-shrinker. If the squared norm of the second fundamental form $S$ is constant  and  satisfies $S\le10/7$, then $M^n$ is one of the following:\\
(1) a hyperplane $\mathbb{R}^n$ passing through the origin,\\
(2) a round sphere $\mathbb{S}^n(\sqrt{n})$ with the center at the origin,\\
(3) a cylinder $\mathbb{S}^k(\sqrt{k})\times \mathbb{R}^{n-k},~~1\leq k\leq n-1.$
\end{theorem}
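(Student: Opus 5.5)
The plan follows the scheme of Cheng--Wei \cite{c-0} and Cheng--Li--Wei \cite{cw-1}: Simons-type identities for the $\mathcal{L}$-operator $\mathcal{L}=\Delta-\langle X,\nabla\cdot\rangle$, combined with the generalized maximum principle of Cheng--Peng \cite{c-2}, which replaces the polynomial volume growth assumption. We use the standard identities for a self-shrinker:
\[
\tfrac12\mathcal{L}S=\sum_{i,j,k}h_{ijk}^2+S(1-S),\qquad \mathcal{L}H=H(1-S).
\]
If $S$ is constant, the first gives $\sum h_{ijk}^2=S(S-1)$. Hence $S\ge1$ or $S=0$. If $S=0$ we get a hyperplane through the origin. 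If $S=1$, then $\nabla h=0$, and Lawson's classification of isoparametric hypersurfaces together with (\ref{self}) gives the sphere $\mathbb{S}^n(\sqrt n)$ or the cylinders. It therefore suffices to rule out $1<S\le 10/7$.

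\textbf{Step 1: bounds on $\sup H^2$ and $\inf H^2$.} Since $S$ is constant, $|h|$ and $|\nabla h|$ are bounded. We apply the generalized maximum principle to $H$ (and to $H^2$). At sequences $p_k$ with $H^2(p_k)\to\sup H^2$ we have $\nabla H\to0$ and $\limsup\mathcal{L}H^2\le0$. Here
\[
\tfrac12\mathcal{L}H^2=|\nabla H|^2+H^2(1-S),
\]
so if $S>1$ this is negative unless $\sup H^2=0$. This yields information at the sup and inf. We also compute $\mathcal{L}f_3$ and the second derivatives $h_{ijkl}$, using the Ricci identities together with the differentiated identity
\[
\sum h_{ijk}h_{ijkl}=0.
\]
Then we pass to limits along these sequences with $h_{ij}(p_k)\to\bar h_{ij}$ (diagonalized), $h_{ijk}(p_k)\to\bar h_{ijk}$ and $h_{ijkl}(p_k)\to\bar h_{ijkl}$. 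The goal is to show $\sup H^2>0$ (otherwise the minimal case $H\equiv0$ gives $X^\perp=0$, a cone, which is impossible with constant $S>0$ and smoothness at the origin) and to derive algebraic relations among $\bar\lambda_i$, $\bar h_{ijk}$ and $\bar H$.

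\textbf{Step 2: estimates at the limit points.} At the limit point, $\bar H_{,k}=0$ and $\sum\bar h_{ijk}^2=S(S-1)$. The third derivative identity gives
\[
\sum_{i,j,k,l}\bar h_{ijkl}^2=\text{(expression in } S,\ f_3,\ f_4,\ \bar h_{ijk}\lambda\text{-terms)}.
\]
Taking $\mathcal{L}$ of $\sum h_{ijk}^2=\text{const}$ produces
\[
0=\sum h_{ijkl}^2+(2-S)\sum h_{ijk}^2+\text{cubic terms }\sum(\lambda_i-\lambda_j)h_{ijk}^2\cdots .
\]
The main task is to bound these curvature terms by a Huisken/Okumura-type inequality in terms of $S$, $S(S-1)$ and $\bar H$. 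This should reach a contradiction when $1<S\le10/7$, once $\sum\bar h_{ijkl}^2$ is bounded below using the structure $\bar h_{ijkl}$ has from the Ricci identity, e.g. through the components $\bar h_{iijj}$ coming from $H_{,ij}$.

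\textbf{Main obstacle.} I expect the main difficulty to be the sharp algebraic estimate of the mixed term $\sum(\lambda_i-\lambda_j)^2\bar h_{ijk}^2$ and of $\sum\bar h_{ijkl}^2$ at the limit points. Cheng--Li--Wei obtained only $7/5$ there. To improve it to $10/7$, I would first try splitting the indices according to whether $\bar\lambda_i=0$, and use $\bar H_{,k}=0$ together with the relation
\[
\sum_k\bar\lambda_k\bar h_{kij}=\text{(from }\nabla(H+\langle X,\xi\rangle)\text{)}
\]
to kill mixed terms. I would then apply Lagrange multipliers to the quadratic forms under the constraints $\sum\bar h_{ijk}^2=S(S-1)$ and $\sum\bar\lambda_i^2=S$. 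The point is to obtain an inequality of the form $(S-1)(10-7S)\ge c>0$ that fails at equality only in the excluded isoparametric cases.
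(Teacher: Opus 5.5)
\textbf{There is a genuine gap.} Your reduction to $1<S\le 10/7$ is correct and matches the paper's treatment of $S\le 1$. After that, however, the proposal contains no actual argument, and the steps you do specify would not work.

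\textbf{Step 1 gives essentially nothing.} Applying the generalized maximum principle to $H^2$ yields, along a sequence realizing $\sup H^2$, only $\sup H^2\,(1-S)\le 0$, which is automatic for $S>1$. The only genuine output is $\inf H^2=0$, obtained at the infimum.

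\textbf{Evaluating at $H^2$-maximizing points does not help.} The second-order facts you want to exploit all hold at every point of $M$:
\begin{itemize}
\item $\sum h_{ijk}^2=S(S-1)$;
\item $\sum h_{ijkl}^2=S(S-1)(S-2)+3(A-2B)$;
\item the Ricci-identity bound $\sum h_{ijkl}^2\ge \frac32 Sf$ with $f=f_4-f_3^2/S$;
\item Cheng--Li--Wei's Proposition 3.1.
\end{itemize}
Evaluating them along a maximizing sequence for $H^2$ adds only the constraint $\sum_i h_{iik}\to 0$, and you give no reason this suffices. The paper's argument needs one further inequality that is available only along a maximizing sequence of a function whose $\mathcal L$ contains these derivative terms. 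It applies the maximum principle to $F=\frac14 Sf_4-\frac16 cf_3^2$, which gives
\[
-(S-1)(Sf_4-cf_3^2)\le 2cf_3\sum\lambda_ih_{ijk}^2-S(2A+B)+3c\sum_k(\sum_i\lambda_i^2h_{iik})^2 .
\]
Here $c=1+c_1(t)$ with $t=(S-1)/S$, and $c_1$ is tuned so that $(\frac{10}{13}+c_1)^2=\frac{3(14+t)}{13}c_1$. This tuning is exactly what lets the cross term $f_3\sum\lambda_ih_{ijk}^2$ be absorbed via Cauchy--Schwarz and the Cheng--Wei bound on $(\sum\lambda_ih_{ijk}^2)^2$.

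\textbf{Your proposed relation does not pass to the limit.} The relation you want to extract from $\nabla(H+\langle X,\xi\rangle)=0$ involves $\langle X,e_k\rangle$. This is unbounded along a maximizing sequence, since the points may escape to infinity.

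\textbf{The improvement from $7/5$ to $10/7$ is missing.} This is the whole content of the theorem beyond Cheng--Li--Wei, and you leave it as ``should reach a contradiction'' via Lagrange multipliers, with no quantitative mechanism. In the paper it rests on a new algebraic inequality for the normalized curvatures $\mu_i=\lambda_i/\sqrt S$:
\[
\delta<\tfrac65+\tfrac{15}{4}G,\qquad \delta=(\max\mu_i-\min\mu_i)^2,\quad G=f/S^2 .
\]
This is combined with the Cheng--Wei estimates
\[
A-B\le\tfrac13(\lambda_1-\lambda_2)^2S(S-1)(1-\alpha),\qquad \sum_k(\sum_i\lambda_i^2h_{iik})^2\le\tfrac{1+2\alpha}{3}S(S-1)f .
\]
The argument then runs in three stages:
\begin{itemize}
\item show the auxiliary quantity $U$ is nonnegative, using the weights $\theta=(1+2\alpha/5)^{-1}$ and $\rho=23/25$;
\item deduce $G\le 92/375$;
\item reach the contradiction $d\le p+\frac{13}{6}\alpha G<d$ with $d=\frac25+\frac54G$.
\end{itemize}
Every numerical step uses $t\in[2/7,3/10]$, so the range $1<S\le 7/5$ is imported from Cheng--Li--Wei rather than reproved.

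\textbf{Your target inequality has the wrong sign.} The inequality $(S-1)(10-7S)\ge c>0$ holds for all $1<S<10/7$, so deriving it would exclude none of the values you need to rule out.
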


By combining Theorem \ref{thm1} with the Gauss equation and the generalized maximum principle, we obtain the same classification under a lower bound for the scalar curvature.

\begin{corollary}\label{cor1}
Let $X:M^n\to\mathbb R^{n+1}$ be a  complete self-shrinker. If the squared norm  of the second fundamental form  $S$ is constant
and the scalar curvature  satisfies $R\ge-10/7$,  then $M^n$ is one of the following:\\
(1) a hyperplane $\mathbb{R}^n$ passing through the origin,\\
(2) a round sphere $\mathbb{S}^n(\sqrt{n})$ with the center at the origin,\\
(3) a cylinder $\mathbb{S}^k(\sqrt{k})\times \mathbb{R}^{n-k},~~1\leq k\leq n-1.$
\end{corollary}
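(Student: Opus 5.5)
Since $S$ is constant, the Gauss equation gives $R=H^2-S$. For a self-shrinker $H=-\langle X,\xi\rangle$, so the plan is to transfer the lower bound $R\ge -10/7$ into the upper bound $S\le 10/7$ and then apply Theorem \ref{thm1}. The difficulty is that $R\ge -10/7$ only says $H^2\ge S-10/7$, which does not by itself bound $S$. So I would use the $L$-operator $\mathcal{L}=\Delta-\langle X,\nabla\,\cdot\,\rangle$ together with the generalized maximum principle of Cheng--Peng \cite{c-2}. That principle applies because $S$ is constant, hence bounded, and $M$ is complete with Ricci curvature bounded below (by Gauss, $\mathrm{Ric}\ge -S$).

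The key identities for self-shrinkers are the standard ones:
\[
\mathcal{L}H=(1-S)H,\qquad \tfrac12\mathcal{L}H^2=|\nabla H|^2+(1-S)H^2,
\]
\[
\tfrac12\mathcal{L}S=|\nabla h|^2+(1-S)S .
\]
With $S$ constant, the last identity gives $|\nabla h|^2=S(S-1)$, so $S=0$ or $S\ge 1$. If $S=0$ the hypersurface is a hyperplane and we are done. If $S=1$, we are already inside the range of Theorem \ref{thm1}. It therefore remains to treat $S>1$ and show $S\le 10/7$; in fact I expect to reach a contradiction whenever $S>10/7$.

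For $S>1$, apply the generalized maximum principle to $H^2$, which is bounded since $H^2\le nS$. There is a sequence $p_k$ with
\[
H^2(p_k)\to \inf H^2,\qquad |\nabla H^2|(p_k)\to 0,\qquad \liminf \mathcal{L}H^2(p_k)\ge 0 .
\]
Using the identity for $\tfrac12\mathcal{L}H^2$, the last condition is harmless, so I would instead apply the principle at the supremum, where $\limsup\mathcal{L}H^2(p_k)\le 0$. This yields
\[
\lim |\nabla H|^2(p_k)+(1-S)\sup H^2\le 0,
\]
which is consistent when $S>1$ and again gives no contradiction. The cleaner route is to work at the infimum and argue that $\inf H^2=0$. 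Suppose $H^2\ge c>0$. Then $H$ has a fixed sign, and $\mathcal{L}H=(1-S)H$ with $S>1$ means $\pm H$ is a positive function with $\mathcal{L}(\pm H)<0$, i.e. it is strictly superharmonic for $\mathcal{L}$. Applying the maximum principle at the infimum of $|H|$ gives $0\le \liminf\mathcal{L}|H|(p_k)=(1-S)\inf|H|<0$, a contradiction. Hence $\inf H^2=0$, and therefore $\inf R=\inf H^2-S=-S$.

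The hypothesis $R\ge -10/7$ then forces $-S\ge -10/7$, that is, $S\le 10/7$, and Theorem \ref{thm1} gives the classification. The main obstacle is justifying the use of the generalized maximum principle for $|H|$. This is why I work under the assumption $|H|\ge\sqrt c>0$: there $|H|$ is smooth and bounded below, so the principle applies directly with no regularity issue at zeros of $H$.
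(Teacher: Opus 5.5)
Your argument is correct and is essentially the paper's proof. Both use the generalized maximum principle together with $LH=(1-S)H$ and $S>1$ to show that $H$ cannot stay bounded away from zero: the paper assumes $S>10/7$ and applies the principle to $H^{-2}$, while you apply it to $-|H|$, conclude $\inf H^2=0$, and deduce $-S=\inf R\ge -10/7$. One small slip: the Gauss equation does not give $\mathrm{Ric}\ge -S$ in general, since the eigenvalues $\lambda_i(H-\lambda_i)$ can be as low as $-\tfrac{\sqrt{n-1}}{2}S$; however, only some lower bound on Ricci is needed, and constancy of $S$ still provides one.
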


The proof follows the maximum-principle approach of Cheng, Li, and Wei. We apply the generalized maximum principle for the $ L$-operator to a function of $f_3=\sum_i\lambda_i^3$ and $f_4=\sum_i\lambda_i^4$, where $\lambda_i$ are the principal curvatures. At the resulting limiting configuration, we combine the estimate for the second covariant derivatives of the second fundamental form in \cite{cw-1} with an algebraic inequality for the normalized principal curvatures. These estimates give a contradiction whenever $7/5<S\le10/7$.

This paper is organized as follows. In section 2, we give the necessary facts about  the self-shrinkers and the generalized maximum principle.
In section 3, we present some  estimates and the proofs of the main results.

\vskip 1 cm
\section{Fundamental equations of self-shrinker}
Let $X: M^n\to \mathbb{R}^{n+1}$ be an $n$-dimensional immersed  hypersurface in the  Euclidean space $\mathbb{R}^{n+1}$. Let $\{e_1,\cdots,e_n\}$ be a local orthonormal frame field of $X(M^n)$ with respect to the first fundamental form $I=\langle dX,dX\rangle$ with dual coframe field $\{\omega_1,\cdots,\omega_n\}$. Let $\xi$ be the unit normal vector field of $X$.
In this paper we use the following conventions on the ranges of indices:
$ 1\leq i,j,k,l\leq n.$
Thus we have the structure equation of $X$,
\begin{equation}\label{eq-str}
\begin{split}
&dX=\sum_{i} \omega_{i}e_{i},\\
&de_{i}=\sum_{j} \omega_{ij}e_{j}+\sum_jh_{ij}\omega_j\xi,\\
&d\xi=-\sum_{i,j}h_{ij}\omega_je_{i},
\end{split}
\end{equation}
where $h_{ij}$ denote the components of the second fundamental form of $X$
 and $\omega_{ij}$ are the Levi-Civita connection forms of $I$ with respect to $\{\omega_1,\cdots,\omega_n\}$.

The Levi-Civita connection $\nabla$ is defined by $\nabla e_i=\sum_j\omega_{ij}\otimes e_j$ and $\omega_{ij}=-\omega_{ji}$.
The symmetric $2$-form
$$ h=\sum_{i,j} h_{ij}\omega_{i}\otimes \omega_{j} $$
is called the second fundamental form of the hypersurface $X$. Let $S=\sum_{i,j}(h_{ij})^{2}$ be the squared norm of the second fundamental form of $X$. $H=\sum_{i} h_{ii}$ is called the mean curvature of $X$, which is different from the definition in differential geometry books. The Gauss equations and the Codazzi equations of the hypersurface $X$ are given by
\begin{equation}\label{eq-1}
\begin{split}
&R_{ijkl}=h_{ik}h_{jl}-h_{il}h_{jk},~~1\leq i,j,k,l\leq n,\\
&h_{ij,k}=h_{ik,j}, ~~1\leq i,j,k\leq n,
\end{split}
\end{equation}
where the covariant derivative of $h_{ij}$ is defined by
$$\sum_kh_{ij,k}\omega_k=dh_{ij}+\sum_kh_{ik}\omega_{kj}+\sum_kh_{kj}\omega_{ki}.$$
By the Gauss equation, the scalar curvature $R$ satisfies
\begin{equation}\label{eq-2}
R=H^2-S.
\end{equation}

We define the invariants  $f_3$  and $f_4$ as follows:
$$f_3=\sum_{i,j,k}h_{ij}h_{jk}h_{ki},~~f_4=\sum_{i,j,k,l}h_{ij}h_{jk}h_{kl}h_{li}.$$
We call $f_3$ the $3$-mean curvature of the hypersurface $X$.

Let $\lambda_1,\cdots,\lambda_n$ be the principal curvatures of $X$, then
$$H=\sum_i\lambda_i,~~S=\sum_i\lambda^2_i,~~f_3=\sum_i\lambda^3_i, ~~f_4=\sum_i\lambda^4_i.$$

Let $f$ be  a smooth function on $M^n$, we define the covariant derivatives $f_i$ and $f_{ij}$  of $f$ as follows,
\begin{equation}\label{f-f}
df=\sum_{i}f_{i}\omega_{i},~~~\sum_{j}f_{ij}\omega_j=df_{i}+\sum_{j} f_{j}\omega_{ji}.
\end{equation}
The gradient $\nabla f$ of $f$ is the vector field dual to the $1$-form $df$, which is defined by
$\nabla f=\sum_if_ie_i.$
$$\Delta f=\sum_{i} f_{ii}.$$
The following  important operator $L$  was introduced by Colding and Minicozzi (see \cite{cold-2}),
$$Lf=\Delta f-\langle X,\nabla f\rangle.$$

Now let $X:M^n\to \mathbb{R}^{n+1}$ be a
self-shrinker, then
\begin{equation}\label{lself}
H+\langle X,\xi\rangle=0.
\end{equation}
 Using equation (\ref{lself}), we have the following equations,
\begin{equation}\label{lself-1}
\begin{split}
&H_i=\sum_kh_{ik}\langle X,e_k\rangle,~~1\leq i\leq n,\\
&H_{ij}=\sum_kh_{ik,j}\langle X,e_k\rangle+h_{ij}-H\sum_kh_{ik}h_{kj}.
\end{split}
\end{equation}
By a direct calculation, or see \cite{c-5}, we have the following equations:
\begin{lemma}\label{le-4-1}
Let $X: M^n\to \mathbb{R}^{n+1}$ be a self-shrinker in $\mathbb{R}^{n+1}$, then we have
\begin{equation}\label{eq-3}
\begin{split}
&Lh_{ij}=(1-S)h_{ij},\\
&LH=H(1-S),\\
&\frac{1}{2}LH^2=|\nabla H|^2+H^2(1-S),\\
&\frac{1}{2}LS=\sum_{i,j,k}h^2_{ij,k}+S(1-S).
\end{split}
\end{equation}
\end{lemma}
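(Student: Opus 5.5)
The four identities come from a direct computation: Simons-type commutation, the Codazzi equation and the differentiated self-shrinker equation (\ref{lself-1}). I would do it in a local orthonormal frame and use the Ricci identity in the form
$$h_{ij,kl}-h_{ij,lk}=\sum_m h_{mj}R_{mikl}+\sum_m h_{im}R_{mjkl}.$$

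\textbf{Step 1 (the first identity).} By Codazzi, $h_{ij,kk}=h_{ik,jk}$. Commuting the last two indices and then applying Codazzi again gives
$$\Delta h_{ij}=\sum_k h_{kk,ij}+\sum_{k,m}\big(h_{mk}R_{mijk}+h_{im}R_{mkjk}\big)=H_{,ij}+\sum_{k,m}\big(h_{mk}R_{mijk}+h_{im}R_{mkjk}\big).$$
Substituting the Gauss equation (\ref{eq-1}), the curvature terms collapse to the classical Simons expression
$$\sum_m\big(Hh_{im}h_{mj}\big)-Sh_{ij}$$
(the other cubic terms cancel pairwise). Next insert the formula for $H_{ij}$ from (\ref{lself-1}), using Codazzi $h_{ik,j}=h_{ij,k}$:
$$H_{ij}=\sum_k h_{ij,k}\langle X,e_k\rangle+h_{ij}-H\sum_k h_{ik}h_{kj}.$$
The $H\sum h_{ik}h_{kj}$ terms cancel, and the first term is exactly $\langle X,\nabla h_{ij}\rangle$. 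This yields $\Delta h_{ij}-\langle X,\nabla h_{ij}\rangle=(1-S)h_{ij}$, i.e.\ $Lh_{ij}=(1-S)h_{ij}$. One should be careful here that $\nabla h_{ij}$ means the tensorial derivative; this is harmless, since $L$ acting on the tensor is what is meant.

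\textbf{Step 2 (the remaining identities).} Tracing the first identity over $i=j$ gives $LH=H(1-S)$, because $L$ commutes with contraction by the metric. For any function $u$, the product rule gives
$$\tfrac12 L(u^2)=uLu+|\nabla u|^2.$$
With $u=H$ this produces the third formula. For the fourth, apply the same rule to $S=\sum_{i,j} h_{ij}^2$ summed over components:
$$\tfrac12 LS=\sum_{i,j}h_{ij}Lh_{ij}+\sum_{i,j,k}h_{ij,k}^2=S(1-S)+\sum_{i,j,k}h_{ij,k}^2.$$

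\textbf{Main obstacle.} The only real work is the sign and index bookkeeping in the Simons computation in Step 1. Everything else follows mechanically from the self-shrinker equation and the product rule. To control the signs, I would check the result on $\mathbb{S}^n(\sqrt n)$, where $h_{ij}=\frac{1}{\sqrt n}\delta_{ij}$ and $S=1$: there both sides of the first identity vanish, which confirms the sign conventions.
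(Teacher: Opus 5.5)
Your proof is correct and is the direct calculation the paper invokes without details. The paper refers to the literature for it, and your computation (Simons' identity $\Delta h_{ij}=H_{,ij}+H\sum_m h_{im}h_{mj}-Sh_{ij}$, combined with Codazzi and the differentiated self-shrinker equation \eqref{lself-1}, then tracing and the product rule) is the standard argument. One small remark: your check on $\mathbb{S}^n(\sqrt n)$ cannot catch a sign error in the coefficient, since both $1-S$ and $S-1$ vanish at $S=1$; your index bookkeeping is right, so nothing rests on that check.
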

By applying the product rule
to the traces and then using \eqref{eq-3}, we obtain the following equations:
\begin{equation}\label{eq-4}
 \begin{aligned}
 \frac13 L f_3&=(1-S)f_3+2\sum_{i,j,k=1}^n\lambda_i h_{ijk}^2,\\
 \frac14L f_4&=(1-S)f_4+2A+B,
 \end{aligned}
\end{equation}
where \[
 A=\sum_{i,j,k=1}^n\lambda_i^2h_{ijk}^2,\qquad
  B=\sum_{i,j,k=1}^n\lambda_i\lambda_jh_{ijk}^2.
\]
We shall use the following generalized maximum principle for the
$L$-operator, due to Cheng and Peng \cite{c-2}.

\begin{lemma}[Generalized maximum principle]\label{lem:maximum}
Let $X:M^n\to\mathbb R^{n+1}$ be a complete self-shrinker whose Ricci curvature is bounded from below. If $u\in C^2(M)$ is bounded above, then there exists a sequence of points $p_j\in M$ such that
\[
 u(p_j)\longrightarrow\sup_{M^n} u,\qquad
  |\nabla u|(p_j)\longrightarrow0,\qquad
  \limsup_{j\to\infty}L u(p_j)\le0.
\]
\end{lemma}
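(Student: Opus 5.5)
The plan is an Omori--Yau type argument adapted to the drift Laplacian $L$. Since the drift term $-\langle X,\nabla u\rangle$ is unbounded, the auxiliary exhaustion function has to be chosen so that its $L$-Laplacian stays bounded. I will work with the intrinsic distance $r(p)=d(p,p_0)$ from a fixed point $p_0\in M^n$. Completeness makes $r$ proper, so closed $r$-balls are compact. The auxiliary function is
\[
\psi=\log(1+r^2).
\]

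\textbf{Step 1: $\psi$ has bounded gradient and bounded $L\psi$.} The gradient bound is direct:
\[
|\nabla\psi|=\frac{2r|\nabla r|}{1+r^2}\le 1 .
\]
For $L\psi$, the Ricci lower bound $\mathrm{Ric}\ge-(n-1)K$ gives, by Laplacian comparison, $\Delta r\le (n-1)\sqrt K\coth(\sqrt K r)\le C(1+1/r)$. Hence
\[
\Delta r^2=2+2r\Delta r\le C(1+r).
\]
For the drift term, $|X(p)|\le |X(p_0)|+r(p)$ because $X$ is $1$-Lipschitz for the intrinsic metric. Therefore
\[
|\langle X,\nabla r^2\rangle|\le 2r(|X(p_0)|+r)\le C(1+r^2),
\]
which gives $L r^2\le C(1+r^2)$. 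Consequently
\[
L\psi=\frac{Lr^2}{1+r^2}-\frac{|\nabla r^2|^2}{(1+r^2)^2}\le C,
\]
with $C$ independent of the point. These estimates hold wherever $r$ is smooth.

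\textbf{Step 2: penalize $u$ and take a maximum.} Fix $\varepsilon_j\downarrow0$. Choose $q_j$ with $u(q_j)>\sup u-\varepsilon_j$, and then choose $\delta_j>0$ so small that $\delta_j\psi(q_j)<\varepsilon_j$ and $\delta_j<\varepsilon_j$. Set $F_j=u-\delta_j\psi$. Since $u$ is bounded above and $\psi\to\infty$ properly, $F_j$ attains its maximum at some $p_j$. At $p_j$ the first- and second-order conditions give
\[
\nabla u=\delta_j\nabla\psi,\qquad Lu\le\delta_j L\psi .
\]
By Step 1 this yields $|\nabla u|(p_j)\le\delta_j$ and $Lu(p_j)\le C\delta_j$. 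For the value of $u$,
\[
u(p_j)\ge F_j(p_j)\ge F_j(q_j)>\sup u-2\varepsilon_j .
\]
Letting $j\to\infty$ gives the three required conclusions.

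\textbf{Step 3: the cut locus (main obstacle).} Step 2 uses derivatives of $\psi$ at $p_j$, but $r$ need not be smooth there if $p_j$ lies in the cut locus of $p_0$. I will handle this with Calabi's trick. If $p_j$ is a cut point, move the base point a small distance $\eta$ along a minimizing geodesic from $p_0$ to $p_j$, giving a point $p_0^\eta$. Replace $r$ by
\[
r_\eta=\eta+d(\cdot,p_0^\eta)\ge r ,
\]
which equals $r$ at $p_j$ and is smooth near $p_j$. Then $u-\delta_j\log(1+r_\eta^2)$ still attains a local maximum at $p_j$, and the comparison estimates of Step 1 hold for $r_\eta$ with constants uniform in $\eta$. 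Running the computation of Step 2 with $r_\eta$ and letting $\eta\to0$ gives the same bounds.

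The points needing care are the uniformity of the constants in Step 1 near $r=0$ and under the Calabi modification. Near $r=0$, the singular term $1/r$ in the Laplacian comparison is harmless because $\Delta r^2$ is bounded on compact sets.
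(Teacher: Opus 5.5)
The paper gives no proof of this lemma (it is quoted from Cheng--Peng), and your proposal is a correct version of essentially their Omori--Yau-type argument: Laplacian comparison from the Ricci lower bound, and the estimate $|X|\le r+|X(p_0)|$, which makes your penalty $\log(1+r^2)$ have bounded $L$-Laplacian despite the unbounded drift, and Calabi's trick at cut points. One detail deserves an explicit line: a cut point $p_j$ satisfies $r(p_j)\ge \mathrm{inj}(p_0)$, so $d(p_j,p_0^\eta)=r(p_j)-\eta$ stays bounded away from $0$ for a fixed small $\eta$; this is what makes the $\coth$ term, and hence the constants, uniform, and no limit $\eta\to0$ is actually needed.
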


\section{Proof of Main theorem}
Let $X:M^n\to\mathbb R^{n+1}$ be a complete self-shrinker whose $S$ is constant.
At any point $p\in M^n$, we can choose an orthonormal basis $\{e_1,e_2,\cdots,e_n\}$  such that
	$$(h_{ij})=\operatorname{diag}(\lambda_1,\lambda_2,\cdots,\lambda_n).$$

The components $h_{ijk\ell}$ of its second covariant derivative are defined by
\begin{equation}\label{eq:hijkl-definition}
 \sum_{\ell=1}^n h_{ijk\ell}\,\omega_\ell
 =dh_{ijk}+\sum_{\ell=1}^n h_{\ell jk}\,\omega_{\ell i}
 +\sum_{\ell=1}^n h_{i\ell k}\,\omega_{\ell j}
 +\sum_{\ell=1}^n h_{ij\ell}\,\omega_{\ell k}.
\end{equation}
With this convention, the Ricci  identity is
\begin{equation}\label{eq:ricci-commutation}
 h_{ijk\ell}-h_{ij\ell k}
 =\sum_{m=1}^n h_{mj}R_{m i k\ell}
 +\sum_{m=1}^n h_{im}R_{m j k\ell}.
\end{equation}
Since $S$ is constant, we have
\begin{equation}\label{eq:3.5}
	\begin{aligned}
		&\sum_{i,j,k}h_{ijk}^2= S(S-1),\ \ \mathcal L h_{ij}=(1-S)h_{ij},\\
		&\sum_{i,j,k,l}h_{ijkl}^2=S(S-1)(S-2) +3(A-2B).
	\end{aligned}
\end{equation}
Defining $u_{ijkl}$ by
$$
u_{ijkl}:=\frac 14(h_{ijkl}+h_{jkli}+h_{klij}+h_{lijk} ),
$$
in view of the Ricci identity (\ref{eq:ricci-commutation}),  a direct computation yields
\begin{equation}\label{eq:3.6}
	\begin{aligned}
		\sum_{i,j,k,l}h_{ijkl}^2 = \sum_{i,j,k,l}u_{ijkl}^2 +\frac32\bigl( Sf_4-f_3^2\bigl).
	\end{aligned}
\end{equation}
Combining \(\eqref{eq:3.5}\) and \(\eqref{eq:3.6}\), we obtain
\begin{equation}\label{eq:12}
	\frac32Sf\le S(S-1)(S-2)+3(A-2B).
\end{equation}

In order to prove Theorem~\ref{thm1}, we first give several estimates
that will be used throughout the proof. The following pointwise estimates can be found in Cheng and Wei \cite{c-0}.
\begin{lemma}[Pointwise estimates]\label{lem:pointwise}
Let $X:M^n\to\mathbb R^{n+1}$ be a self-shrinker, whose $S$ is constant
and satisfies $S>1$. At any point of $M^n$, choose an
orthonormal frame in which the second fundamental form is diagonal, and
write $$\lambda_1=\max_i\lambda_i,~~\lambda_2 =\min_i\lambda_i.$$
Let $$f=f_4-\frac{f_3^2}{S}.$$Then the following inequalities hold:
\begin{align}
 &f\ge\frac{(\lambda_1-\lambda_2)^2\lambda_1^2\lambda_2^2}
                 {\lambda_1^2+\lambda_2^2}\ge 0,\label{eq:extreme-f}\\
 &A-B\le\frac13(\lambda_1-\lambda_2)^2S(S-1)(1-\alpha),\label{eq:ABbound}\\
 &\sum_k\left(\sum_i\lambda_i^2h_{iik}\right)^2
 \le\frac{1+2\alpha}{3}S(S-1)f,\label{eq:derivative-bound}\\
 &\left(\sum_{i,j,k}\lambda_i h_{ijk}^2\right)^2
 \le S(S-1)\left\{\frac{A+2B}{3}
       -\frac43\sum_k\frac{\left(\sum_i\lambda_i^2h_{iik}\right)^2}{S+2\lambda_k^2}\right\},\label{eq:tensor-bound}
\end{align}
where  $\alpha=\dfrac{\sum_ih_{iii}^2 }{S(S-1)}$.
\end{lemma}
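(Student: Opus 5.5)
We prove the four inequalities separately, pointwise, in a frame where $h_{ij}=\lambda_i\delta_{ij}$. The tools are the constancy of $S$, the identity $\sum_{i,j,k}h_{ijk}^2=S(S-1)$ from \eqref{eq:3.5}, the Codazzi symmetry of $h_{ijk}$, and the first-order consequence of $S$ being constant, namely $\sum_i\lambda_ih_{iik}=0$ for every $k$.

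\textbf{Inequality \eqref{eq:extreme-f}.} Write
\[
f=\frac{1}{S}\Bigl(\sum_i\lambda_i^2\sum_j\lambda_j^4-\bigl(\sum_i\lambda_i^3\bigr)^2\Bigr)=\frac{1}{2S}\sum_{i,j}\lambda_i^2\lambda_j^2(\lambda_i-\lambda_j)^2 .
\]
This is a Lagrange identity, and it gives $f\ge 0$. To get the stronger bound, keep only the terms with $i$ or $j$ in $\{1,2\}$:
\[
Sf\ge \lambda_1^2\lambda_2^2(\lambda_1-\lambda_2)^2+\sum_{k\ge3}\lambda_k^2\bigl[\lambda_1^2(\lambda_1-\lambda_k)^2+\lambda_2^2(\lambda_2-\lambda_k)^2\bigr].
\]
Since $\lambda_2\le\lambda_k\le\lambda_1$, minimizing the bracket over $\lambda_k$ in that interval gives at least $\frac{\lambda_1^2\lambda_2^2(\lambda_1-\lambda_2)^2}{\lambda_1^2+\lambda_2^2}$; this is a one-variable quadratic minimization. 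Summing with weights $\lambda_k^2$ and using $\sum_k\lambda_k^2=S$ gives
\[
Sf\ge \lambda_1^2\lambda_2^2(\lambda_1-\lambda_2)^2\Bigl(1+\frac{S-\lambda_1^2-\lambda_2^2}{\lambda_1^2+\lambda_2^2}\Bigr)=S\,\frac{\lambda_1^2\lambda_2^2(\lambda_1-\lambda_2)^2}{\lambda_1^2+\lambda_2^2},
\]
which is the claim.

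\textbf{Inequality \eqref{eq:ABbound}.} By symmetry of $h_{ijk}$,
\[
A-B=\frac{1}{6}\sum_{i,j,k}\bigl[(\lambda_i-\lambda_j)^2+(\lambda_j-\lambda_k)^2+(\lambda_k-\lambda_i)^2\bigr]h_{ijk}^2 .
\]
Each bracket is at most $2(\lambda_1-\lambda_2)^2$; this is the maximum of the sum of squared pairwise differences of three numbers in $[\lambda_2,\lambda_1]$. When $i=j=k$ the bracket vanishes, so the sum effectively runs over non-diagonal triples, whose total weight is $S(S-1)-\sum_ih_{iii}^2=S(S-1)(1-\alpha)$. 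This yields the factor $\tfrac13$.

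\textbf{Inequality \eqref{eq:derivative-bound}.} Set $v_k=\sum_i\lambda_i^2h_{iik}$. Because $\sum_i\lambda_ih_{iik}=0$, we may write $v_k=\sum_i(\lambda_i^2-c\lambda_i-d)h_{iik}$, with $c$ chosen so that $\sum_i\lambda_i(\lambda_i^2-c\lambda_i)=0$, i.e.\ $c=f_3/S$. Cauchy–Schwarz over $i$ then gives
\[
v_k^2\le\Bigl(\sum_i(\lambda_i^2-c\lambda_i)^2\Bigr)\sum_ih_{iik}^2=f\sum_ih_{iik}^2 .
\]
Summing over $k$ gives $\sum_{i,k}h_{iik}^2=\sum_ih_{iii}^2+\sum_{i\neq k}h_{iik}^2$. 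Each off-diagonal $h_{iik}^2$ occurs three times in $\sum_{i,j,k}h_{ijk}^2$, so $\sum_{i\ne k}h_{iik}^2\le \frac13\bigl(S(S-1)-\sum_ih_{iii}^2\bigr)$. Hence $\sum_{i,k}h_{iik}^2\le\frac{1+2\alpha}{3}S(S-1)$, which gives the bound.

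\textbf{Inequality \eqref{eq:tensor-bound}.} This is the delicate one. Apply Cauchy–Schwarz as $\bigl(\sum\lambda_ih_{ijk}^2\bigr)^2\le S(S-1)\sum\mu_{ijk}^2h_{ijk}^2$, where $\mu_{ijk}$ is the symmetrized weight $\frac13(\lambda_i+\lambda_j+\lambda_k)$ shifted by suitable $k$-dependent terms. The shift exploits $\sum_i\lambda_ih_{iik}=0$ and the linear constraint that $v_k$ imposes. Expanding $\sum\mu^2h^2$ produces $\frac{A+2B}{3}$ plus correction terms, and we choose the shift parameters optimally for each $k$. The resulting quadratic optimization in the free parameter produces the subtracted term $\frac{4}{3}\frac{v_k^2}{S+2\lambda_k^2}$, where $S+2\lambda_k^2$ is the norm of the constraint vector counted with multiplicity 3. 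I expect this last step to be the main obstacle: finding the right shift and handling the multiplicities of the index patterns $(i,i,k)$.
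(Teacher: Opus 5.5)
\paragraph{Parts (\ref{eq:extreme-f})--(\ref{eq:derivative-bound}).} Your proofs of \eqref{eq:extreme-f}, \eqref{eq:ABbound} and \eqref{eq:derivative-bound} are correct. The paper does not reprove this lemma; it cites Cheng--Wei \cite{c-0}. Two small remarks. In \eqref{eq:derivative-bound} you may not subtract a constant $d$, because $\sum_i h_{iik}=H_{,k}$ need not vanish. You never actually use $d$, though, so just drop it. In \eqref{eq:extreme-f} the restriction $\lambda_2\le\lambda_k\le\lambda_1$ is not needed, since the unconstrained minimum of the quadratic already gives the bound.

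\paragraph{The gap: part (\ref{eq:tensor-bound}).} What you wrote for \eqref{eq:tensor-bound} describes what you hope will happen; it is not an argument. Its framing also cannot work as stated. If you shift the weights $\mu_{ijk}$ inside $\sum\mu_{ijk}^2h_{ijk}^2$, Cauchy--Schwarz only permits shifts $s_{ijk}$ with $\sum s_{ijk}h_{ijk}^2=0$. That condition is quadratic in $h$, whereas the relation you must exploit, $\sum_i\lambda_ih_{iik}=0$, is linear in $h$.

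The missing idea is a projected Cauchy--Schwarz inequality: you shift the vector $(\mu_{ijk}h_{ijk})$, not the weights. Let $x=(h_{ijk})$ and $y=(\mu_{ijk}h_{ijk})$ with $\mu_{ijk}=\frac13(\lambda_i+\lambda_j+\lambda_k)$. Then $\langle x,y\rangle=\sum\lambda_ih_{ijk}^2$, $|x|^2=S(S-1)$, and $|y|^2=\frac{A+2B}{3}$.

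For each $k$, let $w^{(k)}$ have entries $w^{(k)}_{iik}=w^{(k)}_{iki}=w^{(k)}_{kii}=\lambda_i$ for $i\ne k$, $w^{(k)}_{kkk}=3\lambda_k$, and zero elsewhere. These vectors have the following properties.
\begin{itemize}
\item $\langle x,w^{(k)}\rangle=3\sum_i\lambda_ih_{iik}=0$.
\item The supports of the $w^{(k)}$ are pairwise disjoint, because $k$ is the unique index occurring an odd number of times in each supported triple. Hence they are mutually orthogonal.
\item $|w^{(k)}|^2=3(S+2\lambda_k^2)$. This is the ``multiplicity $3$'' you guessed.
\item $\langle y,w^{(k)}\rangle=\sum_i(2\lambda_i^2+\lambda_i\lambda_k)h_{iik}=2v_k$, using $\sum_i\lambda_ih_{iik}=0$ once more. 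Here $v_k=\sum_i\lambda_i^2h_{iik}$.
\end{itemize}
Let $P$ be the orthogonal projection onto $\mathrm{span}\{w^{(k)}\}$. Since $x\perp Py$, we get
\[
\Bigl(\sum_{i,j,k}\lambda_ih_{ijk}^2\Bigr)^2=\langle x,y-Py\rangle^2\le|x|^2\bigl(|y|^2-|Py|^2\bigr)
=S(S-1)\Bigl\{\frac{A+2B}{3}-\frac43\sum_k\frac{v_k^2}{S+2\lambda_k^2}\Bigr\},
\]
which is \eqref{eq:tensor-bound}. The key point is the weight $3\lambda_k$ on the $(k,k,k)$ entry. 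It is what makes $\langle x,w^{(k)}\rangle$ vanish and, at the same time, collapses $\langle y,w^{(k)}\rangle$ to $2v_k$. Without this explicit choice and the orthogonality check, the fourth inequality is not established.
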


We also use the following estimate of Cheng--Li--Wei
\cite[Proposition 3.1]{cw-1} for the second covariant derivatives
of the second fundamental form.
\begin{proposition}\label{pro3.1} For an $n$-dimensional  self-shrinker  $X: M\rightarrow \mathbb{R}^{n+1}$
	with constant  squared norm of the second fundamental form and $S>1$, we have
	\begin{equation}\label{eq:fourth}
		\begin{aligned}
			&S(S-1)(S-2) +3(A-2B) \\
			&\ge 2 Sf-2A+\frac 4{S(S-1)}\sum_i\lambda_i^2\big (\sum_j\lambda_j^2h_{jji}\big)^2\\
			&+\frac {2f_3}S \sum_{i,j,k}\lambda_ih_{ijk}^2  +\frac 1{S^2}\big(\sum_{i,j,k}\lambda_ih_{ijk}^2 \big)^2.
		\end{aligned}
	\end{equation}
\end{proposition}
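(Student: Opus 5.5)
The statement to prove is Proposition~\ref{pro3.1}, which bounds $\sum_{i,j,k,l}h_{ijkl}^2$ from below; by \eqref{eq:3.5} the left-hand side equals this sum. My plan is to find enough structure in the fourth-order tensor $h_{ijkl}$ to force a lower bound. I would decompose it as $h_{ijkl}=u_{ijkl}+(h_{ijkl}-u_{ijkl})$. The antisymmetric part is determined by the Ricci identity \eqref{eq:ricci-commutation}, and in the diagonal frame it equals $\tfrac12(\lambda_i-\lambda_j)\lambda_i\lambda_j$-type curvature terms. These produce exactly the $\tfrac32(Sf_4-f_3^2)=\tfrac32 Sf$ contribution in \eqref{eq:3.6}. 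It therefore suffices to show
\[
\sum u_{ijkl}^2\ \ge\ \tfrac12 Sf-2A+\frac{4}{S(S-1)}\sum_i\lambda_i^2\Big(\sum_j\lambda_j^2h_{jji}\Big)^2+\frac{2f_3}{S}\sum\lambda_ih_{ijk}^2+\frac1{S^2}\Big(\sum\lambda_ih_{ijk}^2\Big)^2 .
\]
The shortfall of $\tfrac32Sf$ against the target $2Sf$ must come from the projections below, together with the $-2A$ slack.

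The lower bound for $\sum u_{ijkl}^2$ I would get by projecting $u$ (or $h_{ijkl}$ itself) onto specific test tensors built from the identities implied by $S$ being constant. Differentiating $\sum h_{ij}h_{ijk}=0$ gives
\[
\sum_{i,j}h_{ij}h_{ijkl}=-\sum_{i,j}h_{ijk}h_{ijl},
\]
so in the diagonal frame $\sum_i\lambda_ih_{iikl}=-\sum_{i,j}h_{ijk}h_{ijl}$. Similarly, differentiating $\sum_{i,j}h_{ijk}^2=S(S-1)$ relates $\sum h_{ijk}h_{ijkl}$ to zero. Also, $Lh_{ij}=(1-S)h_{ij}$ together with $H_i=\sum_k h_{ik}\langle X,e_k\rangle$ gives the traces $\sum_k h_{ijkk}$.

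I would apply Cauchy--Schwarz against three families of test tensors:
\begin{itemize}
\item[(a)] $\lambda_i\delta_{ij}\delta_{kl}$ symmetrized. This yields a term $\frac{1}{S^2}\big(\sum\lambda_ih_{ijk}^2\big)^2$ after contracting with $\delta_{kl}$ and using the identity above, with the $\frac{2f_3}{S}$ cross term arising when the $\lambda_i^2$ part is combined.
\item[(b)] A tensor of the form $\lambda_i^2\lambda_k^2 h$-components, i.e.\ the derivative of $f_3$-type quantities. This is tied to $\nabla f_3=3\sum\lambda_i^2h_{iik}$ and gives the $\frac{4}{S(S-1)}\sum_i\lambda_i^2\big(\sum_j\lambda_j^2h_{jji}\big)^2$ term, normalized by $\sum h_{ijk}^2=S(S-1)$.
\item[(c)] The curvature part, handled by orthogonality to the fully symmetric part.
\end{itemize}
Terms $A$ and $B$ appear from the quadratic expressions $\sum_{i,j}h_{ijk}h_{ijl}$ weighted by $\lambda$'s.

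I would then combine the two sides. The $3(A-2B)$ on the left and $-2A$ on the right cancel bookkeeping terms, and the mixed terms are absorbed by completing squares. The main obstacle is the choice of test tensors: they must be nearly orthogonal so that the projections add rather than overlap. My first attempt would be to Gram--Schmidt the $\lambda$-weighted trace tensor against the $\delta$-trace tensor, since their inner product is $\sum\lambda_i=H$ (or $f_3$-type weights). This overlap is exactly where the $\frac{2f_3}{S}$ cross term, rather than a cleaner expression, should emerge. Sign control of the remaining cross terms without extra assumptions is the delicate point; if it fails, I would fall back on retaining part of $\sum u_{ijkl}^2$ rather than discarding it.
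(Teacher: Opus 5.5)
\paragraph{There is a genuine gap.} The paper does not prove this proposition itself; it imports it from Cheng--Li--Wei. So your outline has to stand on its own, and as written it does not. None of the terms on the right of \eqref{eq:fourth} is actually derived. The issue you set aside as ``delicate'' (why separate Cauchy--Schwarz projections can be added without double counting) is essentially the whole content of the estimate. The specific ingredients you name also fail:
\begin{itemize}
\item In (a), pairing $h_{ijkl}$ with $\lambda_i\delta_{ij}\delta_{kl}$ gives $\sum_{i,k}\lambda_ih_{iikk}=-S(S-1)$ against a squared norm of order $nS$, a useless dimension-dependent bound. With $h_{ij}h_{kl}$ you do get the pairing $-X$, where $X:=\sum_{i,j,k}\lambda_ih_{ijk}^2$. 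But a single projection yields only $X^2/S^2$, never $\frac{2f_3}{S}X$, $-2A$, or the missing $\frac12Sf$.
\item In (b) no identity is supplied. The normalization $S(S-1)=|\nabla h|^2$ points to $\nabla h$ itself as the test tensor. However, $\nabla h$ pairs to zero with every slice $(h_{ijlk})_{i,j,l}$ of fixed derivative index $k$, because $|\nabla h|^2$ is constant.
\item The traces $\sum_kh_{ijkk}$ coming from $Lh_{ij}$ contain $\langle X,e_k\rangle$ and are of no use here.
\item Your reduction through \eqref{eq:3.6} is logically valid, but it discards the Ricci identity, which is precisely what generates the extra terms.
\end{itemize}

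\paragraph{The missing idea.} Slice by a Codazzi index rather than the derivative index, and keep the Ricci identity as data. For fixed $k$ set $U^{(k)}_{ijl}=h_{ijkl}$, so that $\sum_{i,j,k,l}h_{ijkl}^2=\sum_k|U^{(k)}|^2$. Let $s_k=\sum_{i,j}h_{ijk}^2$ and $g_k=\sum_j\lambda_j^2h_{jjk}$. Three pieces of data are available in each slice:
\begin{enumerate}
\item $\sum_{i,j,l}h_{ijl}U^{(k)}_{ijl}=-2\lambda_kg_k$. This follows from $\sum_{i,j,l}h_{ijl}h_{ijlk}=0$ together with the Ricci identity.
\item $U^{(k)}_{kjj}=h_{kkjj}=h_{jjkk}-(\lambda_j-\lambda_k)\lambda_j\lambda_k$. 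Hence the differentiated constraint $\sum_j\lambda_jh_{jjkk}=-s_k$ becomes $\sum_j\lambda_jU^{(k)}_{kjj}=-(s_k+\lambda_kf_3-\lambda_k^2S)$.
\item $U^{(k)}_{iik}-U^{(k)}_{iki}=(\lambda_i-\lambda_k)\lambda_i\lambda_k$ for $i\ne k$.
\end{enumerate}
The corresponding test tensors are mutually orthogonal. $\nabla h$ is orthogonal to the one in (2) because $\sum_j\lambda_jh_{jjk}=0$, and to those in (3) because $h_{ijk}$ is totally symmetric. The tensors in (2) and (3) live on disjoint components. Bessel's inequality then gives
\[
|U^{(k)}|^2\ge\frac{4\lambda_k^2g_k^2}{S(S-1)}+\frac{(s_k+\lambda_kf_3-\lambda_k^2S)^2}{S}+\frac12\sum_{i\ne k}(\lambda_i-\lambda_k)^2\lambda_i^2\lambda_k^2 .
\]
Summing over $k$ yields
\[
2Sf-2A+\frac{2f_3}{S}X+\frac1S\sum_ks_k^2+\frac{4}{S(S-1)}\sum_k\lambda_k^2g_k^2 .
\]
Combined with $\sum_ks_k^2\ge X^2/S$ and \eqref{eq:3.5}, this is \eqref{eq:fourth}.

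This also shows why projecting $u$ loses. For $u$ the pairing in (1) drops to $-\frac32\lambda_kg_k$ and (3) becomes vacuous. So your route would give $\frac{9}{4S(S-1)}$ in place of $\frac{4}{S(S-1)}$.
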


The preceding estimates control the contractions involving the covariant
derivatives of the second fundamental form. To complete the gap argument, we
also need an elementary inequality relating the spread of the normalized
principal curvatures to the quantity $f$. We record this inequality in the
following lemma.
\begin{lemma}\label{lem-spectral}
Let $\mu_1,\cdots,\mu_n\in \mathbb{R}$ satisfy $\sum_i\mu_i^2=1$, and put
\[
 G=\sum_i\mu_i^4-\left(\sum_i\mu_i^3\right)^2,
 \qquad \delta=(\max_i\mu_i-\min_i\mu_i)^2.
\]
Then $G\ge0$, $0\le\delta\le2$, and
\begin{equation}\label{eq:spectral}
 \delta<\frac65+\frac{15}{4}G.
\end{equation}
\end{lemma}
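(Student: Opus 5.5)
The claims $G\ge0$ and $\delta\le2$ are immediate; the real content is \eqref{eq:spectral}. I would use the extreme-curvature bound \eqref{eq:extreme-f} from Lemma~\ref{lem:pointwise} to reduce it to a one-variable polynomial inequality.

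\textbf{Step 1: the easy claims.} By Cauchy--Schwarz, $\bigl(\sum_i\mu_i^3\bigr)^2=\bigl(\sum_i\mu_i\cdot\mu_i^2\bigr)^2\le\sum_i\mu_i^2\sum_i\mu_i^4=\sum_i\mu_i^4$, so $G\ge0$. Let $a=\max_i\mu_i$ and $b=\min_i\mu_i$. If they are attained at the same index, then $\delta=0$. Otherwise $\delta=(a-b)^2\le2(a^2+b^2)\le2\sum_i\mu_i^2=2$.

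\textbf{Step 2: reduction to two variables.} Since $G\ge0$, \eqref{eq:spectral} holds trivially when $\delta<6/5$, so assume $\delta\ge 6/5$. Then $a>0>b$: if $a,b$ had the same sign, we would get $\delta\le\max(a^2,b^2)\le1$.

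The algebraic inequality behind \eqref{eq:extreme-f} uses only $\sum\mu_i^2=1$. In our normalization $S=1$ and $f=G$, and it gives
\[
G\ge \frac{(a-b)^2a^2b^2}{a^2+b^2}.
\]
Write $a=r\cos\theta$, $b=-r\sin\theta$ with $x=r^2=a^2+b^2\le1$ and $\theta\in(0,\pi/2)$, and set $s=\sin2\theta\in(0,1]$. Then
\[
\delta=x(1+s),\qquad \frac{a^2b^2}{a^2+b^2}=\frac{xs^2}{4}.
\]
It therefore suffices to prove $\delta-\frac{15}{4}\delta\cdot\frac{xs^2}{4}<\frac65$, that is,
\[
\phi(x,s):=x(1+s)\Bigl(1-\tfrac{15}{16}s^2x\Bigr)<\tfrac65 \quad\text{for } x\in(0,1],\ s\in(0,1].
\]

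\textbf{Step 3: eliminating $x$.} For fixed $s$, the map $x\mapsto x(1-cx)$ with $c=\frac{15}{16}s^2$ is increasing on $[0,1/(2c)]$.

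If $1/(2c)\ge1$, the maximum over $(0,1]$ is at $x=1$.

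If $1/(2c)<1$, then $s^2>8/15$ and the maximum is at most $(1+s)/(4c)=\frac{4(1+s)}{15s^2}\le\frac{8}{15}\cdot\frac{15}{8}\cdot\frac{2\cdot 15}{8\cdot 15}$. Put more simply, this is at most $\frac{4\cdot 2}{15\cdot (8/15)}=1<6/5$.

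So it remains to show $(1+s)(1-\frac{15}{16}s^2)<\frac65$ on $[0,1]$. Clearing denominators, this is
\[
p(s):=75s^3+75s^2-80s+16>0\qquad (0\le s\le1).
\]

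\textbf{Step 4: the main obstacle.} The expected difficulty is this last polynomial inequality, because it is nearly sharp. The minimum of $p$ on $[0,1]$ occurs at the root of $225s^2+150s-80=0$, namely $s_0=\frac{-15+\sqrt{945}}{45}\approx0.35$, where $p(s_0)\approx0.4$.

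I would make this rigorous in one of two ways. The first is to use $p'=0$ to reduce $p(s_0)$ to a linear expression in $s_0$ and bound $\sqrt{945}$ between rationals. The alternative is to split $[0,1]$ into a few subintervals, for example $[0,0.3]$, $[0.3,0.4]$ and $[0.4,1]$, and on each use monotonicity together with a crude bound on the cubic. This yields $p>0$, hence $\phi<6/5$, and therefore the strict inequality \eqref{eq:spectral}.
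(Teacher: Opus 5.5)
Your proposal is correct and is essentially the paper's proof. Both arguments use the two-extreme-index bound $G\ge a^2b^2(a-b)^2/(a^2+b^2)$; the paper derives it in two lines from $G=\sum_i\mu_i^2(\mu_i-\sum_j\mu_j^3)^2$, and you should include that derivation, since \eqref{eq:extreme-f} is stated for self-shrinkers. Both arguments find the extremal case at $a^2+b^2=1$, and your $p(s)$ is exactly the paper's cubic $75\delta^3-150\delta^2-5\delta+96$ under $\delta=1+s$. The paper merely fixes $\delta$ instead of $s$, which avoids your case $s^2>8/15$.

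To finish Step 4, divide by $p'$ to get $p=(\frac{s}{3}+\frac19)p'+\frac{224}{9}-70s$. Hence $p(s_0)>0$ iff $s_0<\frac{16}{45}$, that is, iff $\sqrt{945}<31$. Also delete the garbled chain in Step 3: it evaluates to $\frac14$, which is a false bound, and you only need $\frac{4(1+s)}{15s^2}<1$.
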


\begin{proof}
Since $\sum_i\mu_i^2=1$, we obtain
\begin{equation}\label{eq:variance}
 G=\sum_i\mu_i^2\left(\mu_i-\sum_j\mu_j^3\right)^2\ge0.
\end{equation}
 For any two distinct indices $i$ and $j$, we have
\[
 (\mu_i-\mu_j)^2\le2(\mu_i^2+\mu_j^2)\le2,
\]
which implies that $0\le\delta\le2$. Moreover, $\delta=0$ occurs precisely when all the
$\mu_i$ are equal.

If $\delta\le1$, then $G\ge0$ implies
\[	\delta<\frac65+\frac{15}{4}G,\]
which proves \eqref{eq:spectral} in this case.

Suppose now that $\delta>1$. Set $a=\max_i\mu_i$, $b=\min_i\mu_i$, and
$x=a^2+b^2$. Since \(a\) and \(b\) are attained at distinct indices, we have $0<x\le1$. From \eqref{eq:variance}, we obtain
\[
 G\ge a^2\left(a-\sum_i\mu_i^3\right)^2
       +b^2\left(b-\sum_i\mu_i^3\right)^2.
\]
For an arbitrary $z\in \mathbb{R}$, we have
\begin{equation}\label{eq:0}
 a^2(a-z)^2+b^2(b-z)^2
=x\left(z-\frac{a^3+b^3}{x}\right)^2
+\frac{a^2b^2(a-b)^2}{x}
\ge\frac{a^2b^2(a-b)^2}{x}.
\end{equation}
Then taking $z=\sum_i\mu_i^3$ in \eqref{eq:0} and using $(a-b)^2=\delta$ together with
$2ab=x-\delta$, we obtain
\begin{equation}\label{eq:two-root}
 G\ge\frac{a^2b^2(a-b)^2}{x}
     =\frac{\delta(\delta-x)^2}{4x}.
\end{equation}
For fixed $\delta>1$, the derivative of $(\delta-x)^2/x$ with respect to $x$ is
$1-\delta^2/x^2<0$ on $(0,1]$. Hence the right-hand side of
\eqref{eq:two-root} is minimized at $x=1$, and therefore
\begin{equation}\label{eq:variance-diameter}
 G\ge\frac{\delta(\delta-1)^2}{4}.
\end{equation}
Using \eqref{eq:variance-diameter} yields
\[
\begin{aligned}
 \frac65+\frac{15}{4}G-\delta
 &\ge \frac{75\delta^3-150\delta^2-5\delta+96}{80}\\
 &=\frac{15}{16}\left(\delta-\frac{27}{20}\right)^2
      \left(\delta+\frac7{10}\right)
   +\frac{\delta}{1280}+\frac{51}{12800}>0.
\end{aligned}
\]
This proves \eqref{eq:spectral} for $\delta>1$ and completes the proof.
\end{proof}
\begin{proof}[Proof of Theorem~\ref{thm1}]
Since $S$ is constant, the Gauss equation gives a lower bound for the Ricci
curvature. Moreover, the quantities $\lambda_i$, $h_{ij}$, $h_{ijk}$, and $h_{ijk\ell}$
are bounded according to \eqref{eq:3.5}. Thus, for a fixed constant $c>0$, the function
\[
F=\frac14Sf_4-\frac16cf_3^2
\]
is bounded. In view of \eqref{eq-4}, we obtain
\begin{equation}\label{eq:Fc-drift}
\begin{aligned}
 L F
={}&S(1-S)f_4+S(2A+B)\\
&-c\left\{(1-S)f_3^2
+2f_3\sum_{i,j,k}\lambda_i h_{ijk}^2
+3\sum_k\left(\sum_i\lambda_i^2h_{iik}\right)^2
\right\}.
\end{aligned}
\end{equation}
By applying the generalized maximum principle for the $\mathcal L$-operator
to the function $F$, there exists a sequence $\{p_m\}\subset M$ such that
 $$
\lim_{m\to\infty}F(p_m)=\sup F,
\ \ \lim_{m\to\infty}|\nabla F|(p_m)=0,
$$
$$
\limsup_{m\to\infty} \mathcal LF(p_m)\leq 0.
$$
Since $\lambda_i$,  $h_{ijk}$ and  $h_{ijkl}$  are bounded, we can assume that
$\{\lambda_i(p_m)\}$, $\{h_{ijk}(p_m)\}$ and  $\{h_{ijkl}(p_m)\}$  converge
if necessary  by taking a subsequence of $\{p_m\}$. By  abusing  notation, we still use
$\lambda_i$,  $h_{ijk}$ and  $h_{ijkl}$ to denote
limits of $\{\lambda_i(p_m)\}$, $\{h_{ijk}(p_m)\}$ and  $\{h_{ijkl}(p_m)\}$, respectively and all computations are processed under limits.
Passing to the limit in \eqref{eq:Fc-drift} along this subsequence
and using $\limsup_{m\to\infty}\mathcal LF(p_m)\le0$, we obtain
\begin{equation}\label{eq:maxineq}
 -(S-1)(Sf_4-cf_3^2)\le2cf_3\sum_{i,j,k}\lambda_i h_{ijk}^2-S(2A+B)
 +3c\sum_k\left(\sum_i\lambda_i^2h_{iik}\right)^2.
\end{equation}

If $S\le1$, then constancy of $S$ and \eqref{eq-3} give
$0=|\nabla h|^2+(1-S)S$. Hence $S=0$ or $S=1$ and $\nabla h=0$.
The classification of complete self-shrinkers with parallel second
fundamental form gives the hyperplane through the origin, the sphere
$S^n(\sqrt n)$, or a generalized cylinder
$S^k(\sqrt k)\times\mathbb R^{n-k}$, $1\le k\le n-1$, or see
\cite{chengli}. Thus the asserted classification holds in this case. We
henceforth assume $S>1$.

The range $1<S\le7/5$ has already been excluded by Cheng--Li--Wei \cite[Theorem 3.1]{cw-1}.
Next, we will prove that $7/5<S\le10/7$ does not occur. Namely, we will prove $S>10/7$ if $S>7/5$. This is a contradiction since $S$ is constant.

It is convenient to express the range of $S$ in terms of
$t=(S-1)/S$. We work on the closed range
\begin{equation}\label{eq:interval}
 \frac75\le S\le\frac{10}{7},\qquad \frac27\le t\le\frac3{10},
\end{equation}
because all coefficients used below extend continuously to the endpoint
$S=7/5$.

We now estimate the three contraction terms that occur in
\eqref{eq:fourth}. Define
\begin{equation}\label{eq:Phi-expanded}
\begin{aligned}
 \Phi={}&-\frac{13}{3}tSf
 -\frac{2f_3}{S}\sum_{i,j,k}\lambda_i h_{ijk}^2
 -\frac1{S^2}\left(\sum_{i,j,k}\lambda_i h_{ijk}^2\right)^2\\
 &-\frac4{tS^2}\sum_k\lambda_k^2
       \left(\sum_i\lambda_i^2h_{iik}\right)^2 .
\end{aligned}
\end{equation}
In \eqref{eq:maxineq}, we choose $c=1+c_1(t)$, where
\begin{equation}\label{eq:c}
 c_1(t)=\frac{22+3t-\sqrt{(22+3t)^2-400}}{26}.
\end{equation}
The value of $c_1(t)$ in \eqref{eq:c} satisfies
\begin{equation}\label{eq:c1}
 \left(\frac{10}{13}+c_1\right)^2=\frac{3(14+t)}{13}c_1.
\end{equation}
Substituting $c=1+c_1(t)$ into \eqref{eq:maxineq} and using
the definition \eqref{eq:Phi-expanded} of $\Phi$, we obtain
\[
\begin{aligned}
 \Phi\le{}&-\frac{13}{3}(2A+B)-\frac{13c_1t}{3}f_3^2
 +\left(\frac{10}{3}+\frac{13c_1}{3}\right)
       \frac{2f_3}{S}\sum_{i,j,k}\lambda_i h_{ijk}^2\\
 &+\frac{13(1+c_1)}{S}\sum_k\left(\sum_i\lambda_i^2h_{iik}\right)^2
 -\frac1{S^2}\left(\sum_{i,j,k}\lambda_i h_{ijk}^2\right)^2
 -\frac4{tS^2}\sum_k\lambda_k^2
       \left(\sum_i\lambda_i^2h_{iik}\right)^2\\
 \le{}&-\frac{13}{3}(2A+B)
 +\left\{\frac{13}{3tc_1}\left(\frac{10}{13}+c_1\right)^2-1\right\}
       \frac1{S^2}\left(\sum_{i,j,k}\lambda_i h_{ijk}^2\right)^2\\
 &+\frac{13(1+c_1)}{S}\sum_k\left(\sum_i\lambda_i^2h_{iik}\right)^2
 -\frac4{tS^2}\sum_k\lambda_k^2
       \left(\sum_i\lambda_i^2h_{iik}\right)^2,
\end{aligned}
\]
where we used $2xy\leq x^2+y^2$ in the last inequality with
\[
\begin{aligned}
 x=\sqrt{\frac{3}{13tc_1}}\left(\frac{10}{3}+\frac{13c_1}{3}\right)
       \frac1S\sum_{i,j,k}\lambda_i h_{ijk}^2,\quad
 y=\sqrt{\frac{13tc_1}{3}}f_3.
\end{aligned}
\]
Applying \eqref{eq:tensor-bound} to the term containing
$\left(\sum_{i,j,k}\lambda_i h_{ijk}^2\right)^2$ in this upper bound
for $\Phi$, we obtain
\[
\begin{aligned}
 \Phi\le{}&-\frac{13}{3}(2A+B)
 +\frac{13(1+c_1)}{S}\sum_k\left(\sum_i\lambda_i^2h_{iik}\right)^2
 -\frac4{tS^2}\sum_k\lambda_k^2
       \left(\sum_i\lambda_i^2h_{iik}\right)^2\\
 &+\left\{\frac{13}{3c_1}\left(\frac{10}{13}+c_1\right)^2-t\right\}
 \left\{\frac13(A+2B)
 -\frac43\sum_k\frac{\left(\sum_i\lambda_i^2h_{iik}\right)^2}
                         {S+2\lambda_k^2}\right\}\\
 ={}&-4A+5B+
 \sum_k\left(\frac{13(1+c_1)}{S}-\frac{4\lambda_k^2}{tS^2}
 -\frac{56}{3(S+2\lambda_k^2)}\right)
 \left(\sum_i\lambda_i^2h_{iik}\right)^2,
\end{aligned}
\]
where the last equality follows from \eqref{eq:c1}.
Because of
\[
 -\frac{4\lambda_k^2}{tS^2}-\frac{56}{3(S+2\lambda_k^2)}
 =\frac{2}{tS}-\frac{2(S+2\lambda_k^2)}{tS^2}-\frac{56}{3(S+2\lambda_k^2)}
 \le\frac1S\left(\frac2t-8\sqrt{\frac7{3t}}\right),
\]
we obtain
\begin{equation}\label{eq:3.9}
	\begin{aligned}
		\Phi&\leq -4A+5B+\dfrac{\eta(t)}S\sum_{j}\big( \sum_i\lambda_i^2h_{iij}\bigl)^2,\\
	\end{aligned}
\end{equation}
where $\eta(t)$ is defined by
$$
\begin{aligned}
	\eta(t)&= 13(1+c_1)+\dfrac{2}{t}-8\sqrt{\dfrac{7}{3t}}\\
	&=24+\dfrac{3t-\sqrt{(22+3t)^2-400}}{2}+\dfrac{2}{t}
	-\dfrac{8\sqrt {21}}{3}\dfrac1{\sqrt{t}}.
\end{aligned}
$$

 Since $\dfrac{d\eta(t)}{dt}>0$ for  $2/7\le t\le3/10$,  $\eta(t)$
is an increasing function when $2/7\le t\le3/10$. Thus,
we have
\[
 0<\eta\left(\frac{2}{7}\right)\le\eta(t)\le\eta\left(\frac{3}{10}\right)<\frac{13}{4}.
\]
Combining
\eqref{eq:derivative-bound}
and \eqref{eq:3.9} , we obtain
\begin{equation}\label{eq:phi-bound}
 \Phi\le-4A+5B+\frac{\eta(t)}{3}(1+2\alpha)tSf
 \le-4A+5B+\frac{13}{12}(1+2\alpha)tSf.
\end{equation}
Substituting \eqref{eq:Phi-expanded} into \eqref{eq:phi-bound}
and collecting the terms containing $tSf$, we obtain
\begin{equation}\label{eq:second}
\begin{aligned}
 &-\left(\frac{65}{12}+\frac{13}{6}\alpha\right)tSf
 -\frac{2f_3}{S}\sum_{i,j,k}\lambda_i h_{ijk}^2\\
 &-\frac1{S^2}\left(\sum_{i,j,k}\lambda_i h_{ijk}^2\right)^2
 -\frac4{S(S-1)}\sum_k\lambda_k^2
       \left(\sum_i\lambda_i^2h_{iik}\right)^2
 \le -4A+5B .
\end{aligned}
\end{equation}

To estimate the three terms subtracted from
$-\left(65/12+13\alpha/6\right)tSf$ in \eqref{eq:second},
we denote their sum by
\begin{equation}\label{eq:U}
 U:=\frac{2f_3}{S}\sum_{i,j,k}\lambda_i h_{ijk}^2
  +\frac1{S^2}\left(\sum_{i,j,k}\lambda_i h_{ijk}^2\right)^2
  +\frac4{S(S-1)}\sum_k\lambda_k^2
       \left(\sum_i\lambda_i^2h_{iik}\right)^2.
\end{equation}
Thus \eqref{eq:second} can be written as
\begin{equation} \label{eq:second2}
 -\left(\frac{65}{12}+\frac{13}{6}\alpha\right)tSf-U\le -4A+5B.
\end{equation}

Firstly, we set
\begin{equation}\label{eq:normalized}
 \mu_i=\frac{\lambda_i}{\sqrt S},\qquad p=\frac{A-B}{S^2(S-1)}, \qquad \delta=(\max_i\mu_i-\min_i\mu_i)^2,
\end{equation}
then from \eqref{eq:ABbound} and Lemma~\ref{lem-spectral}, we obtain
\begin{equation}\label{eq:normalized-bounds}
p\le\frac\delta3(1-\alpha),\qquad0\le\delta\le2,
\end{equation}
and
\begin{equation}\label{eq:spectral-final}
 \delta<\frac65+\frac{15}{4}G,
\end{equation}
where
$$ G= \sum_i\mu_i^4-\left(\sum_i\mu_i^3\right)^2  =\frac{f_4}{S^2}-\frac{f_3^2}{S^3}=\frac{f}{S^2} \ge 0.$$

We now prove that $U$ is non-negative. For a
contradiction, we suppose that
\begin{equation*}
	U<0.
\end{equation*}
 Define
\[
 \theta=\frac1{1+2\alpha/5}, \quad \rho=\frac{23}{25}.
\]
Since $\alpha\ge0$, we have $0<\theta\le1$, and the definition of $\theta$ gives
\[
\theta\left(\frac{65}{12}+\frac{13}{6}\alpha\right)=\frac{65}{12}.
\]
Since $U<0$, the quantity $-\rho\theta U$ is strictly positive.
Multiplying \eqref{eq:second2} by the positive number $\rho\theta$ and discarding the positive term $-\rho\theta U$ give
\begin{equation}\label{eq:1}
	-\frac{65}{12}\rho tSf
	\le\rho\theta(-4A+5B).
\end{equation}
Adding \eqref{eq:1} to \eqref{eq:12} yields
\[
\begin{aligned}
 \left(\frac32-\frac{65}{12}\rho t\right)Sf
 &\le S(S-1)(S-2)+3(A-2B)+\rho\theta(-4A+5B)\\
 &=S(S-1)(S-2)+(3-4\rho\theta)A+(-6+5\rho\theta)B.
\end{aligned}
\]

Since $0<\rho\theta<1$, we have
\[
\begin{aligned}
 &(3-4\rho\theta)A+(-6+5\rho\theta)B\\
 &\quad=\left(4-\frac{13\rho\theta}{3}\right)(A-B)
       +\left(\frac{\rho\theta}{3}-1\right)(A+2B)\\
 &\quad\le\left(4-\frac{13\rho\theta}{3}\right)(A-B),
\end{aligned}
\]
where the last inequality follows from
\[
A+2B=\frac13\sum_{i,j,k}(\lambda_i+\lambda_j+\lambda_k)^2h_{ijk}^2\ge0.
\]
We therefore obtain
\begin{equation}\label{key1}
 \left(\frac32-\frac{65}{12}\rho t\right)Sf
\le\left(4-\frac{13\rho\theta}{3}\right)(A-B)+S(S-1)(S-2).
\end{equation}
Dividing \eqref{key1} by $tS^3$ and using
$p=(A-B)/(tS^3)$, $G=f/S^2$, and $S-1=tS$, we obtain
\begin{equation}\label{key1s}
\left(\frac{3}{2t}-\frac{65}{12}\rho\right)G
\le \left(4-\frac{13\rho\theta}{3}\right)p+2t-1.
\end{equation}

Since $2/7\le t\le3/10$ and $G\ge0$, we have
\begin{equation}\label{key2}
\left(\frac{3}{2t}-\frac{65}{12}\rho\right)G
\ge\left(5-\frac{23}{25}\cdot\frac{65}{12}\right)G=\frac1{60} G\ge0.
\end{equation}
On the other hand, since $0<\theta\le1$,
we have
$$4-\frac{13\rho\theta}{3}>0.$$
Then from \eqref{eq:normalized-bounds},  we obtain
\begin{equation}\label{key3}
	\begin{aligned}
\left(4-\frac{13\rho\theta}{3}\right)p+2t-1&\le \left(4-\frac{13\rho\theta}{3}\right)\frac{1-\alpha}{3}\delta+2t-1\\
&\le\frac3{25}\delta+2t-1\le-\frac4{25},
\end{aligned}
\end{equation}
where the second inequality follows from
\[
 \frac3{25}-\left(4-\frac{13\rho\theta}{3}\right)\frac{1-\alpha}{3}
 =\frac{600\alpha^2-541\alpha+130}{225(2\alpha+5)}>0.
\]
Combining \eqref{key1s}, \eqref{key2} and \eqref{key3}, we obtain
\begin{equation}\label{const1}
	0\le \left(\frac{3}{2t}-\frac{65}{12}\rho\right)G
	\le \left(4-\frac{13\rho\theta}{3}\right)p+2t-1\le-\frac4{25},
\end{equation}
which is impossible. Thus the assumption $U<0$ is false, and we have proved that
\[
U\ge0.
\]

Combining \eqref{eq:fourth} with $\theta$ times \eqref{eq:second} yields
\[
\begin{aligned}
\left(2-\frac{65}{12}t\right)Sf-2A+(1-\theta)U
\le{}&S(S-1)(S-2)+3(A-2B)\\
&+\theta(-4A+5B).
\end{aligned}
\]
Since $(1-\theta)U \ge 0$, $\theta\le1$ and $A+2B\ge0$, we obtain
\begin{equation}\label{eq:after-U}
\begin{aligned}
\left(2-\frac{65}{12}t\right)Sf
&\le S(S-1)(S-2)+(5-4\theta)A+(-6+5\theta)B\\
&=S(S-1)(S-2)+\frac{16-13\theta}{3}(A-B)+\frac{\theta-1}{3}(A+2B)\\
&\le S(S-1)(S-2)+\frac{16-13\theta}{3}(A-B).
\end{aligned}
\end{equation}

Dividing \eqref{eq:after-U} by $tS^3$ yields
\begin{equation}\label{eq:G-upper-final}
	\begin{aligned}
	\left(\frac2t-\frac{65}{12}\right)G
	&\le \frac{16-13\theta}{3}p+2t-1\\
	&\le \frac{53}{150}\delta+2t-1,
	\end{aligned}
\end{equation}
where the last inequality follows from
\[
\frac{53}{50}-\frac{16-13\theta}{3}(1-\alpha)
 =\frac{1600\alpha^2-532\alpha+45}{150(2\alpha+5)}>0.
\]

For fixed $\delta$, set
\[
r(t)=t\left(\frac{53}{150}\delta+2t-1\right).
\]
Differentiating $r(t)$ with respect to $t$, we obtain, for
$2/7\le t\le3/10$,
\[
r'(t)=\frac{53}{150}\delta+4t-1\ge\frac17>0,
\]
so $r$ is increasing. Multiplying \eqref{eq:G-upper-final} by $t$
gives $(2-65t/12)G\le r(t)$. Since $G\ge0$ and
$2-65t/12\ge3/8$, the monotonicity of $r$ and $\delta\le2$ yield
\[
\frac38G\le r(t)\le r\left(\frac3{10}\right)
 =\frac{53}{500}\delta-\frac3{25}
 \le\frac{23}{250}.
\]
Thus, we obtain
\begin{equation}\label{eq:G-bound}
0\le G\le\frac{92}{375}.
\end{equation}

Finally, adding \eqref{eq:fourth} and \eqref{eq:second} gives
\begin{equation}\label{eq:final-Sf}
\left[2-\left(\frac{65}{12}+\frac{13}{6}\alpha\right)t\right]Sf
\le(A-B)+S(S-1)(S-2).
\end{equation}
Dividing \eqref{eq:final-Sf} by $tS^3=S^2(S-1)$ and substituting
$p=(A-B)/(tS^3)$ and $G=f/S^2$ give
\begin{equation}\label{eq:lower-final}
\begin{aligned}
p+\frac{13}{6}\alpha G
&\ge1-2t+\left(\frac2t-\frac{65}{12}\right)G\\
&\ge\frac25+\frac54G,
\end{aligned}
\end{equation}
where the second follows from $t\le3/10$ and $G\ge0$.
Set
\[
d:=\frac25+\frac54G.
\]
From \eqref{eq:spectral-final} and \eqref{eq:G-bound}, we obtain
\[
\frac{\delta}{3}<d,\qquad
d-\frac{13}{6}G
 =\frac25-\frac{11}{12}G
 \ge\frac{197}{1125}>0.
\]
Since $p\le\delta(1-\alpha)/3$
and $0\le\alpha\le1$, from \eqref{eq:lower-final}, we obtain
\[
d\le p+\frac{13}{6}\alpha G
\le (1-\alpha)\frac\delta3+\frac{13}{6}\alpha G
< (1-\alpha)d+\alpha d=d,
\]
which is impossible. Therefore the interval under consideration is impossible, and the proof of the theorem is complete.
\end{proof}

\begin{proof}[Proof of Corollary~\ref{cor1}]
By Theorem~\ref{thm1}, it suffices to show that $S\le10/7$.
Suppose to the contrary that $S>10/7$. Since $R\ge-10/7$, by the Gauss equation \eqref{eq-2}
and the Cauchy--Schwarz inequality, we obtain
\[
 0<S-\frac{10}{7}\le H^2\le nS.
\]
Hence $u=H^{-2}$ is a smooth function bounded above on $M$.
From \eqref{eq-3}, we get
\[
\begin{aligned}
 L u
 &=-\frac{2}{H^3} L H+\frac{6}{H^4}|\nabla H|^2\\
 &=\frac{2(S-1)}{H^2}+\frac{6|\nabla H|^2}{H^4}
 \ge\frac{2(S-1)}{nS}>0.
\end{aligned}
\]
Since $S$ is constant, \eqref{eq-1} also implies that the Ricci
curvature is bounded from below. Applying Lemma~\ref{lem:maximum} to $u$,
we obtain a sequence $\{p_j\}\subset M$ such that
\[
 0<\frac{2(S-1)}{nS}
 \le\limsup_{j\to\infty} L u(p_j)\le0,
\]
which leads to a contradiction. Thus $S\le10/7$, and the proof is complete.
\end{proof}

{\bf Acknowledgements:} Authors are supported by the grant No. 12071028 of NSFC.

\end{document}